\documentclass[final,leqno]{siamltex}
\usepackage{epsfig}
\usepackage{amsmath}
\usepackage{amssymb}
\usepackage{tikz}
\usepackage{graphicx}
\usepackage[notcite,notref]{showkeys}

\newcommand{\bq}{{\bf q}}
\newcommand{\bn}{{\bf n}}

\def\T{{\mathcal T}}

\def\bn{{\bf n}}
\def\bq{{\bf q}}

  \def\b#1{\mathbf{#1}} 
\def\a#1{\begin{align*}#1\end{align*}} \def\an#1{\begin{align}#1\end{align}} \def\t#1{\hbox{#1}}

\newcommand{\pT}{{\partial T}}

\def\3bar{{|\hspace{-.02in}|\hspace{-.02in}|}}

\title{A $P_{k+2}$ polynomial lifting operator on polygons and polyhedrons}

\author{Xiu Ye\thanks{Department of
Mathematics, University of Arkansas at Little Rock, Little Rock, AR
72204 (xxye@ualr.edu). This research was supported in part by
National Science Foundation Grant DMS-1620016.}
\and
Shangyou Zhang\thanks{Department of
Mathematical Sciences, University of Delaware, Newark, DE 19716 (szhang@udel.edu).}
}

\begin{document}

\maketitle

\begin{abstract}
A $P_{k+2}$ polynomial lifting operator is defined on polygons and polyhedrons.
It lifts
   discontinuous polynomials inside the polygon/polyhedron and on the faces to a 
  one-piece $P_{k+2}$ polynomial.
With this lifting operator, we prove that the weak Galerkin finite element solution,
  after this lifting,  
   converges at two orders higher than the optimal order, in both $L^2$ and $H^1$ norms.
The theory is confirmed by numerical solutions of 2D and 3D Poisson equations. 
\end{abstract}

\begin{keywords}
weak Galerkin, finite element methods, Poisson, polytopal meshes
\end{keywords}

\begin{AMS}
Primary: 65N15, 65N30; Secondary: 35J50
\end{AMS} 

\section{Introduction} 

In weak Galerkin finite element methods \cite{wy,yz-sf-wg},  
    discontinuous polynomials, $u_0$ defined inside 
  each element and $u_b$ defined on each face of element,  are employed to form 
  an approximation space.
In particular, on triangular/tetrahedral grids, the $P_k$-$P_{k+1}$  ($P_k$ inside a triangle,
  $P_{k+1}$ on an edge) weak Galerkin finite element solution is two-order superconvergent 
 in both $L^2$ and $H^1$-like norms \cite{Al-Taweel}.
Further,  with a careful construction of weak gradient, such $P_k$-$P_{k+1}$ 
   weak Galerkin finite element is also two-order superconvergent on general 
   polygonal and polyhedral meshes \cite{ye-3}.
Here the super-convergence is defined for the difference between finite element solution 
  $u_0$ and the local $L^2$ projection $Q_hu$ of the exact solution.

In this paper,  we construct a $P_{k+2}$ polynomial lifting operator.
It lifts an $(n+1)$-piece polynomial, $\{u_0, u_b\}$, on a $n$-polygon/polyhedron $T$ to
    a one-piece $P_{k+2}$ polynomial on $T$.
After such a lifting/post-processing,  the
  weak Galerkin finite element solution is two-order super-convergent to
  the exact solution,  i.e.,
\a{  \|u - u_h\|_0 + h|u-u_h|_{1,h} & \le C h^{k+1} |u|_{k+1}, \\
     \|u - L_h u_h\|_0 + h|u-L_hu_h|_{1,h} & \le C h^{k+3} |u|_{k+3},  }
where $u_h$ and $u$ are the finite element solution and the exact solution, respectively,
  and $h$ is the mesh size.

This polynomial lifting operator is different from traditional polynomial lifting operators
  \cite{Ainsworth,Bernardi,Bernardi-Maday,Guo,Munoz-Sola}.
These operators
   only lift a polynomial trace on the boundary of an element to a polynomial inside the element,
  stably, i.e., subject to the minimum or a small energy.
But here we lift both trace data and interior data to a polynomial, subject to the
     $P_{k+2}$ accuracy.
Additionally, even the trace (of boundary polynomials) is discontinuous here.
Well, such a discontinuous-trace polynomial lifting is studied in
  \cite{Demkowicz1,Demkowicz2,Demkowicz3}, but for $H(\t{curl})$ and $H(\t{div})$
   polynomial lifting.

\section{Weak Galerkin finite element}

For solving a model Poisson equation, 
\begin{eqnarray}
-\Delta u&=&f\quad \mbox{in}\;\Omega,\label{pde}\\
u&=&0\quad\mbox{on}\;\partial\Omega,\label{bc}
\end{eqnarray}
where $\Omega$ is a polytopal domain in $\mathbb{R}^2$ or $\mathbb{R}^3$,
we subdivide the domain into shape-regular polygons/polyhedrons of size $h$,   ${\cal T}_h$.
For polynomial degree $k\ge 1$, we define the weak Galerkin finite
element spaces by
\begin{equation}\label{vhspace}
V_h=\{v_h=\{v_0,v_b\}:\; v_0|_T\in P_k(T),\ v_b|_e\in P_{k+1}(e),\ e\subset\pT,  T\in \T_h\}
\end{equation}
and  
\begin{equation}\label{vh0space}
V^0_h=\{v_h : \ v_h\in V_h,\  v_b=0 \mbox{ on } e\subset \partial\Omega\}.
\end{equation}
The weak Galerkin finite element function assumes one $d$-dimensional
    $P_k$ polynomial inside each element $T$,
 and one $(d-1)$-dimensional $P_{k+1}$ polynomial on each face edge/polygon $e$.

On an element $T\in\T_h$, we define the weak gradient $\nabla_w v_h$ of a weak function 
  $v_h=\{v_0,v_b\}\in V_h$ by the solution of polynomial equation on $T$:
\begin{equation}\label{d-d}
  \int_T \nabla_w v_h \bq d\b x=\int_{\partial T} v_b \bq\cdot\bn dS -\int_T v_0
           \nabla\cdot \bq d\b x \quad
   \forall \bq\in \Lambda_k(T),
\end{equation} where $\Lambda_k(T)$ is a piece-wise polynomial space, but with one piece
   polynomial divergence and one piece polynomial trace on each face, on 
a sub-triangular/tetrahedral subdivision of $T=\{ T_i, i=1,...,n\}$,  
\a{ \Lambda_k(T) =\{ \b q\in H(\t{div}, T) :   \  
   & \b q|_{T_i} \in P_{k+1}^d(T_i), T_i\subset T, \\  \  
             & \nabla\cdot \b q\in P_k(T), \ \b q\cdot \b n|_e \in P_{k+1}(e) \}.
        } Here $\b n$ is a fixed normal vector on edge/polygon $e$.
To get a simplicial subdivision on $T$,  some face edges/polygons have to be 
   subdivided.  That is, in addition to $T=\cup_{i} \overline{T}_i$,
   $e=\cup_{j} \overline{e}_j$, where $\{e_j\}$ is the set of face edges/triangles of
   $\{T_i\}$.

A weak Galerkin finite element approximation for (\ref{pde})-(\ref{bc}) is
  defined by the unique solution $u_h=\{u_0,u_b\}\in V_h^0$ satisfying
\begin{equation}\label{wg}
(\nabla_w u_h,\nabla_wv_h)=(f,\; v_0) \quad\forall v_h=\{v_0,v_b\}\in V_h^0.
\end{equation} 
In \cite{ye-3}, both \eqref{d-d} and \eqref{wg} are proved to have a unique solution.

\begin{theorem}(\cite{ye-3}) Let $u$ and $u_h$ be the solutions of \eqref{pde} and \eqref{wg},
   respectively.  The following two-order superconvergence holds
\an{ \label{o-2} \| Q_h u - u_h\|_0 + h \3bar Q_h u - u_h \3bar & \le
     C h^{k+3} |u|_{k+3},
}  where $Q_h u=\{ Q_0 u, Q_b u \}\in V_h^0$  ($Q_0$ and $Q_b$ are local
   $L^2$-projection on $T$ and $e$ respectively), and 
   $\3bar v_h \3bar=(\nabla_w v_h,\nabla_wv_h)^{1/2}$.
\end{theorem}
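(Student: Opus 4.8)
The plan is to follow the standard two-step template for weak Galerkin error analysis — an energy estimate followed by an Aubin--Nitsche duality argument — where the entire difficulty is to extract the two superconvergent orders from the special pairing of the $P_k$-interior / $P_{k+1}$-boundary space $V_h$ with the weak-gradient space $\Lambda_k(T)$.

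First I would establish a commutativity property for the weak gradient. Writing $Q^\Lambda$ for the $L^2$-projection onto $\Lambda_k(T)$, the claim is $\nabla_w(Q_h u)=Q^\Lambda(\nabla u)$. This follows by inserting $Q_h u=\{Q_0 u,Q_b u\}$ into the definition \eqref{d-d} and noting that, because $\div\bq\in P_k(T)$ and $\bq\cdot\bn|_e\in P_{k+1}(e)$ match exactly the polynomial degrees defining $Q_0$ and $Q_b$, the projections may be removed: $\int_{\pT} Q_b u\,\bq\cdot\bn\,dS-\int_T Q_0 u\,\div\bq\,d\bx=\int_{\pT} u\,\bq\cdot\bn\,dS-\int_T u\,\div\bq\,d\bx=\int_T\nabla u\cdot\bq\,d\bx$ for every $\bq\in\Lambda_k(T)$. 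Since $\nabla_w v_h\in\Lambda_k(T)$, this identity lets me replace $\nabla_w Q_h u$ by $\nabla u$ whenever it is paired against $\nabla_w v_h$, and it is the engine of the whole proof.

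Second, set $e_h=Q_h u-u_h$ and derive the error equation. Subtracting \eqref{wg} from $(\nabla_w Q_h u,\nabla_w v_h)$, applying the commutativity, and then integrating by parts element-by-element while using $-\Delta u=f$, I expect to arrive at $(\nabla_w e_h,\nabla_w v_h)=\ell_u(v_h)$, where the consistency functional $\ell_u$ collects face terms measuring the projection error of the flux against the mismatch between interior and boundary traces, schematically $\sum_T\int_{\pT}\big((\nabla u-Q^\Lambda\nabla u)\cdot\bn\big)(v_0-v_b)\,dS$. The crucial point is that, thanks to the enriched $P_{k+1}$ trace space, the leading approximation term of $(\nabla u-Q^\Lambda\nabla u)\cdot\bn$ is orthogonal on each face to the polynomial moments that survive the pairing, so the naive $O(h^{k+1})$ contribution cancels and two extra powers of $h$ are gained, yielding $|\ell_u(v_h)|\le C h^{k+2}|u|_{k+3}\,\3bar v_h\3bar$. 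Taking $v_h=e_h$ then gives the energy bound $\3bar e_h\3bar\le C h^{k+2}|u|_{k+3}$, which is the second summand in \eqref{o-2}. For the $L^2$ estimate I would run duality: let $\phi$ solve $-\Delta\phi=Q_0 u-u_0$ with elliptic regularity $\|\phi\|_2\le C\|Q_0 u-u_0\|_0$, express $\|Q_0 u-u_0\|_0^2$ through the error equation tested with $Q_h\phi$, and pair the two consistency functionals $\ell_u$ and $\ell_\phi$. Each carries its own factor — $h^{k+2}|u|_{k+3}$ from $u$ and $h\|\phi\|_2\le C h\|Q_0u-u_0\|_0$ from $\phi$ — so their product produces the desired $h^{k+3}|u|_{k+3}$ after dividing out $\|Q_0u-u_0\|_0$.

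The main obstacle I anticipate is the consistency estimate in the second step: proving the gain of two orders requires the precise orthogonality generated by the $\Lambda_k(T)$ construction together with scaled trace and inverse inequalities that must remain uniform over shape-regular polytopes and their internal simplicial subdivisions. Controlling those constants on arbitrary polygons and polyhedrons, rather than on a single reference element, is where the real work lies; the duality step becomes routine once both $\ell_u$ and $\ell_\phi$ are known to obey the superconvergent bound.
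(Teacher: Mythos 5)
The theorem is stated in the paper with a citation only --- the paper itself contains no proof, deferring entirely to \cite{ye-3} --- so the comparison must be against that reference. Your skeleton (the commutativity identity $\nabla_w(Q_h u)=Q^\Lambda(\nabla u)$, the error equation with a boundary consistency functional, an energy estimate, then Aubin--Nitsche duality) is exactly the route taken there, your derivation of the commutativity is correct, and your duality step is the standard one and does close once the consistency bounds are in hand.

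The genuine gap is at the step you yourself flag as the crux: the bound $|\ell_u(v_h)|\le Ch^{k+2}|u|_{k+3}\,\3bar v_h\3bar$. The mechanism you propose --- that the leading term of $(\nabla u-Q^\Lambda\nabla u)\cdot\bn$ is orthogonal, face by face, to the moments surviving the pairing, so the naive $O(h^{k+1})$ contribution ``cancels'' --- is not what happens and cannot be made to work: $Q^\Lambda$ is an $L^2(T)$ projection and induces no orthogonality on $\pT$ whatsoever. The two extra orders come from two separate facts, neither of which is a cancellation. First, $[P_{k+1}(T)]^d\subset\Lambda_k(T)$ (a one-piece $P_{k+1}$ vector field lies in $H(\t{div},T)$, has divergence in $P_k(T)$ and normal trace in $P_{k+1}(e)$), so $\|\nabla u-Q^\Lambda\nabla u\|_{0,T}\le Ch^{k+2}|u|_{k+3,T}$, hence $Ch^{k+3/2}|u|_{k+3,T}$ on $\pT$ by the trace inequality: a pure volume approximation gain of one order over the naive $P_k$ gradient space. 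Second --- and this is the missing lemma --- the jump is controlled by the weak gradient alone, $\sum_{T}\|v_0-v_b\|_{\pT}^2\le Ch\,\3bar v_h\3bar^2$. This does not follow from scaled trace and inverse inequalities, uniform or otherwise; it is proved by constructing, on the simplicial subdivision of $T$, a Raviart--Thomas-type field $\bq\in\Lambda_k(T)$ whose normal trace on $\pT$ realizes $v_b-v_0$ (which lies in $P_{k+1}(e)$ on each face) with $\|\bq\|_{0,T}\le Ch^{1/2}\|v_b-v_0\|_{\pT}$, and inserting it into the definition \eqref{d-d}. This lemma is the entire reason $\Lambda_k(T)$ is defined with the piecewise $H(\t{div})$ structure and one-piece normal traces --- it is what replaces the stabilizer of classical weak Galerkin. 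Without it, the pairing against $v_0-v_b$ cannot be bounded by $\3bar v_h\3bar$ at all, so neither your energy estimate (taking $v_h=e_h$) nor the term $\ell_\phi(e_h)$ in your duality argument can be closed.
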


\section{A $P_{k+2}$ polynomial lifting operator}
On an $m$-face polygon/polyhedron $T$  we have $(m+1)$ pieces of polynomials from
  a weak Galerkin finite element function.
We need to lift these polynomials to a one-piece $P_{k+2}$ polynomial, preserving 
  $P_{k+2}$ polynomials in the sense that $L_hQ_h u=u$ if $u$ is a $P_{k+2}$ polynomial.

\begin{theorem} The local $L^2$ projection $Q_h: u\in P_{k+2}(T) \to u_h=\{Q_0 u, Q_b u\}
  \in V_h $ is an injection, i.e.,
\a{   Q_h u = 0 \t{ if and only if } u=0.  }  
\end{theorem}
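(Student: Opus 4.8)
The plan is to show the kernel of $Q_h$ is trivial by a Green's identity energy argument that exploits the precise degrees $k$ inside $T$ and $k+1$ on the faces. Suppose $u \in P_{k+2}(T)$ satisfies $Q_h u = 0$, i.e. $Q_0 u = 0$ and $Q_b u = 0$ on every face $e \subset \pT$. Unwinding the definition of the two $L^2$ projections, these conditions read
\begin{equation*}
\int_T u\,p\,d\bx = 0 \ \ \forall p \in P_k(T), \qquad \int_e u\,q\,dS = 0 \ \ \forall q \in P_{k+1}(e),\ e\subset\pT.
\end{equation*}
The goal is to convert these orthogonalities into the statement $\nabla u \equiv 0$.

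First I would apply Green's first identity to $u$ against itself,
\begin{equation*}
\int_T |\nabla u|^2\,d\bx = \int_{\pT} u\,(\nabla u\cdot\bn)\,dS - \int_T u\,\Delta u\,d\bx ,
\end{equation*}
and then make the key degree observation. Since $u \in P_{k+2}(T)$, its Laplacian lies in $P_k(T)$, so the interior term vanishes by the first orthogonality upon choosing $p=\Delta u$. On each flat face $e$ the unit normal $\bn$ is constant, so the restricted normal derivative $(\nabla u\cdot\bn)|_e$ is a polynomial of degree at most $k+1$ on $e$, i.e. it lies in $P_{k+1}(e)$; hence each boundary term vanishes by the second orthogonality upon choosing $q=(\nabla u\cdot\bn)|_e$. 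Together these give $\int_T |\nabla u|^2\,d\bx = 0$, so $u$ is constant on $T$.

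Finally, with $u\equiv c$ constant, testing the interior orthogonality against $p=1\in P_k(T)$ gives $c\,|T| = 0$, hence $c=0$ and $u\equiv 0$; injectivity of the linear map $Q_h$ follows. The heart of the argument — and the only place the specific construction of $V_h$ enters — is this degree-matching step: the $P_{k+1}$ data on each face is exactly enough to annihilate the normal trace of a $P_{k+2}$ function, and the $P_k$ data inside is exactly enough to annihilate its Laplacian. I expect the one point to verify carefully is that these two restricted polynomials really do belong to the test spaces $P_{k+1}(e)$ and $P_k(T)$ for an arbitrary $u\in P_{k+2}(T)$, which hinges on the faces being flat so that $\bn$ is piecewise constant — a property that holds for all polygons and polyhedrons.
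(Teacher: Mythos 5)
Your proof is correct and is essentially the paper's own argument: the paper integrates $\nabla u$ against all $\mathbf{q}\in[P_{k+1}(T)]^d$ and uses the same two degree-matching facts (divergence in $P_k(T)$, normal trace in $P_{k+1}(e)$ on flat faces) to kill both terms, which upon taking $\mathbf{q}=\nabla u$ is exactly your Green's-identity computation. Your version just specializes to that single test function from the start; the conclusion $\nabla u\equiv 0$, hence $u$ constant and then zero via $Q_0u=0$, is identical.
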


\begin{proof}  
Let $u\in P_{k+2}(T)$ and $Q_h u=0$.
For any vector polynomial $\b q_{k+1} \in [ P_{k+1}(T]^2$,
  we have
\a{ \int_T \nabla u \cdot \b q_{k+1} d\b x &=
      \sum_{e\subset \partial T} \int_{e} u \b q_{k+1}\cdot \b n dS
          - \int_T   u \nabla \cdot \b q_{k+1} d\b x \\
   &= \sum_{e\subset \partial T} \int_{e} Q_b u \b q_{k+1}\cdot \b n dS
          - \int_T   Q_0 u \nabla \cdot \b q_{k+1} d\b x \\
     &= 0. }
Thus $\nabla u=\b 0$ everywhere and $u=C$.
Since $Q_0 u=0$, $C=0$ and $u=0$.
\end{proof}
 
\begin{theorem} The $P_{k+2}$ polynomial lifting operator $L_h$, defined in
  \eqref{lift} below, is $P_{k+2}$ polynomial preserving in the sense that
\an{\label{p-k-2}  L_h Q_h u = u, \quad \t{ if }  u \in P_{k+2}(T).  } 
Consequently we have
\an{ \label{optimal} \|u-L_h Q_h u \|_0 + h|u-L_h Q_h u  |_{1,h} \le C h^{k+3} | u|_{k+3}. }
\end{theorem}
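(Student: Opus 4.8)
The plan is to establish the two assertions in turn: first the polynomial-preservation identity \eqref{p-k-2}, and then the convergence estimate \eqref{optimal} as a consequence. For \eqref{p-k-2} I would argue directly from the defining relations \eqref{lift} of $L_h$. Fix $u \in P_{k+2}(T)$ and write $u_h = Q_h u = \{Q_0 u, Q_b u\}$. The lifted polynomial $L_h u_h \in P_{k+2}(T)$ is the element selected by \eqref{lift} to fit the interior datum $u_0 = Q_0 u$ and the boundary data $u_b = Q_b u$. The key observation is that $u$ itself already satisfies these fitting relations exactly: being a member of $P_{k+2}(T)$, it reproduces its own projections $Q_0 u$ and $Q_b u$, so the residual that \eqref{lift} drives to zero (or the linear system it solves) is satisfied by the candidate $p = u$. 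Uniqueness of the fit then closes the argument, and this is precisely where the previous theorem is used: the injectivity of $Q_h$ on $P_{k+2}(T)$ says that the only $p \in P_{k+2}(T)$ with $Q_0 p = 0$ and $Q_b p = 0$ is $p = 0$, so the defining map of \eqref{lift} is nonsingular and the fit is unique. Hence $L_h Q_h u = u$.

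Granted \eqref{p-k-2}, the estimate \eqref{optimal} follows from the Bramble--Hilbert lemma and a scaling argument. I would pass to the rescaled element $\hat T = h^{-1} T$ of unit diameter. Since $Q_h$ is an $L^2$ projection it commutes with the affine rescaling, and one checks that $L_h$ is likewise scale-covariant, so the composite $L_h Q_h$ transfers cleanly to $\hat T$. On $\hat T$ the operator $I - L_h Q_h$ annihilates $P_{k+2}(\hat T)$ by \eqref{p-k-2}, and it is bounded because $Q_h$ is a projection and $L_h$ is a nonsingular linear map on a finite-dimensional data space. The Bramble--Hilbert lemma therefore gives
\a{ \| \hat u - L_h Q_h \hat u \|_{0,\hat T} + | \hat u - L_h Q_h \hat u |_{1,\hat T} \le C\, | \hat u |_{k+3, \hat T}. }
Scaling back to $T$ multiplies the $L^2$ term by $h^{k+3}$ and the $H^1$-seminorm term by $h^{k+2}$; after the extra factor $h$ that multiplies the seminorm in \eqref{optimal}, both contributions carry $h^{k+3}$. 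Squaring, summing over $T \in \T_h$, and taking the square root yields \eqref{optimal}.

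The main obstacle is the \emph{uniform} stability of $L_h$ across the shape-regular family of polytopes. On simplices this is routine, but here the number of faces $m$ varies from element to element and the geometry is genuinely polygonal/polyhedral, so one must verify that the constant bounding $\|L_h\|$---equivalently, the smallest singular value of the system in \eqref{lift}---stays controlled under the shape-regularity hypothesis alone. Concretely, the injectivity supplied by the previous theorem must be strengthened to a uniform lower bound on that system, and the interior and boundary contributions appearing in \eqref{lift}, which scale as $h^d$ and $h^{d-1}$ respectively, must be balanced by the correct $h$-weighting so that the reference-element constant $C$ above is geometry-independent. Once this uniform bound is in hand, the remainder of the scaling argument is standard.
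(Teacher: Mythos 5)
Your proposal is correct and follows essentially the paper's own route: the identity \eqref{p-k-2} is argued exactly as in the paper (which writes $L_h = Q_h^{-1}P_h$ with $P_h$ the discrete $L^2$ projection onto the image $Q_hP_{k+2}(T)$, so that preservation holds ``by definition'' together with the injectivity theorem you invoke), and your Bramble--Hilbert-plus-scaling argument for \eqref{optimal} is precisely the content of the paper's one-line appeal to \cite{Scott-Zhang} for stable, local, $P_{k+2}$-preserving operators. The uniform stability of $L_h$ over shape-regular polytopes, which you rightly single out as the main obstacle, is not proved in the paper either---it is simply asserted---so your proposal is, if anything, more explicit about where the remaining work lies.
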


\begin{proof} Let $P_h: u_h=\{u_0, u_b\} \in V_h \to  \{ (P_hu_h)_0, (P_hu_h)_b\} \in 
     V_h$ be the local, discrete $L^2(T)$ projection on to the
  image space $Q_h P_{k+2}(T)$, i.e.,  
\a{  &\quad \ \int_T (P_hu_h)_0 Q_0 p_{k+2} d\b x 
        + \sum_{e\subset \partial T} \int_{e} (P_hu_h)_b  Q_b p_{k+2} dS \\
     &=
      \int_T  u_0 Q_0 p_{k+2} d\b x 
        + \sum_{e\subset \partial T} \int_{e} u_b  Q_b p_{k+2} dS \quad \forall p_{k+2}(T).  }
The above equation has a unique solution as the left hand side bilinear form is coercive.
By last theorem,  $Q_h$ is one-to-one from $P_{k+2}(T)$ on to the image space $P_hV_h$.
Its inverse defines an unique lifting operator:
\an{ \label{lift}  L_h u_h = Q_h^{-1}(P_h u_h) \in \prod_{T\in \T_h} P_{k+2}(T).   }
By definition, \eqref{p-k-2} holds.  Further, because $L_h Q_h $ is a stable,
  local preserving $P_{k+2}$ polynomial operator,  by \cite{Scott-Zhang},
  it is an optimal-order interpolation operator and \eqref{optimal} holds.
\end{proof}

\begin{theorem} Let $u$ and $u_h$ be the solutions of \eqref{pde} and \eqref{wg},
   respectively.  Then
\a{ \|  u - L_h u_h\|_0 + h | u - L_h u_h |_{1,h} & \le
     C h^{k+3} |u|_{k+3},
}  where  $| u |_{1,h}^2=\sum_{T\in \T_h}(\nabla u,\nabla u)$.
\end{theorem}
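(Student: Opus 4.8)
The plan is to compare $u$ not with $L_h u_h$ directly but with the polynomial-preserving interpolant $L_h Q_h u$ already analyzed in \eqref{optimal}, and then to absorb the remaining discrepancy using the superconvergence estimate \eqref{o-2}. Writing
\[
u - L_h u_h = \big(u - L_h Q_h u\big) + L_h\big(Q_h u - u_h\big),
\]
the first summand is controlled immediately by \eqref{optimal}, which bounds $\|u - L_h Q_h u\|_0 + h|u - L_h Q_h u|_{1,h}$ by $C h^{k+3}|u|_{k+3}$. Everything then reduces to estimating $L_h w_h$ with $w_h := Q_h u - u_h = \{w_0,w_b\}$, the superconvergent error quantified by \eqref{o-2}, in both the $L^2$ norm and the broken $H^1$ seminorm.

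The second step is to establish the stability of $L_h$, element by element. Since $L_h = Q_h^{-1} P_h$ and $P_h$ is the orthogonal projection onto $Q_h P_{k+2}(T)$ in the discrete weak inner product $\langle v_h, z_h\rangle_{h,T} := \int_T v_0 z_0\,d\bx + h_T\sum_{e\subset\pT}\int_e v_b z_b\,dS$, the projection $P_h$ is a contraction, $\|P_h w_h\|_{h,T}\le\|w_h\|_{h,T}$. Because $Q_h$ is injective on the finite-dimensional space $P_{k+2}(T)$ (the injectivity established above), norm equivalence on a reference element, propagated by the usual affine scaling, gives $\|p\|_{0,T}\le C\|Q_h p\|_{h,T}$ for all $p\in P_{k+2}(T)$ with $C$ independent of $h_T$. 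Chaining these yields the $L^2$ stability
\[
\|L_h w_h\|_{0,T}\le C\|P_h w_h\|_{h,T}\le C\|w_h\|_{h,T},
\]
and, since $L_h w_h$ is a single polynomial on $T$, the inverse inequality upgrades this to $|L_h w_h|_{1,T}\le C h_T^{-1}\|w_h\|_{h,T}$.

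The third step is to bound $\|w_h\|_{h,T}$ by the right-hand side of \eqref{o-2}. The interior contribution $\|w_0\|_{0,T}$ is exactly what $\|Q_h u - u_h\|_0$ controls. The genuinely new piece, which I expect to be the main obstacle, is the boundary contribution $h_T\sum_e\|w_b\|_{0,e}^2$, because $w_b$ is an independent trace unknown that does not appear directly in \eqref{o-2}. I would recover it from the weak-gradient definition \eqref{d-d}: integrating the term $\int_T w_0\nabla\cdot\bq\,d\bx$ by parts against $\bq\in\Lambda_k(T)$ produces the identity
\[
\int_T\big(\nabla_w w_h - \nabla w_0\big)\cdot\bq\,d\bx=\int_{\pT}(w_b-w_0)\,\bq\cdot\bn\,dS .
\]
Choosing $\bq\in\Lambda_k(T)$ whose normal trace on each face equals $w_b-w_0\in P_{k+1}(e)$, with norm $\|\bq\|_{0,T}\le C h_T^{1/2}\|w_b-w_0\|_{0,\pT}$ (the point at which the specific structure of $\Lambda_k(T)$ enters), Cauchy--Schwarz and the inverse inequality $\|\nabla w_0\|_{0,T}\le C h_T^{-1}\|w_0\|_{0,T}$ give $h_T^{1/2}\|w_b-w_0\|_{0,\pT}\le C h_T\3bar w_h\3bar_T + C\|w_0\|_{0,T}$. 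Adding the trace bound $h_T^{1/2}\|w_0\|_{0,\pT}\le C\|w_0\|_{0,T}$ then controls the whole boundary term, so that $\|w_h\|_{h,T}\le C\big(\|w_0\|_{0,T}+h_T\3bar w_h\3bar_T\big)$.

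Finally I would assemble the pieces. Summing the last bound over $T$ and invoking \eqref{o-2} yields $\|w_h\|_h\le C\big(\|Q_h u - u_h\|_0 + h\3bar Q_h u - u_h\3bar\big)\le C h^{k+3}|u|_{k+3}$. Feeding this into the stability estimates of the second step gives $\|L_h w_h\|_0\le C h^{k+3}|u|_{k+3}$ and $h|L_h w_h|_{1,h}\le C\|w_h\|_h\le C h^{k+3}|u|_{k+3}$. Combining with the interpolation bound from the first step through the triangle inequality produces the claimed estimate in both norms. The only delicate points are the scaling-robust inversion of $Q_h$ and the existence of the normal-trace test function in $\Lambda_k(T)$ with controlled norm; both are essentially the reference-element arguments underlying \eqref{d-d} and the injectivity result, and everything else is routine scaling.
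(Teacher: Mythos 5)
Your overall architecture coincides with the paper's: the same splitting $u - L_h u_h = (u - L_h Q_h u) + L_h(Q_h u - u_h)$, with \eqref{optimal} handling the first term and a local stability estimate for $L_h$, combined with \eqref{o-2}, handling the second. Where you differ is in how the stability is obtained, and your route is in several respects more solid. For the $H^1$ part the paper asserts that the weak gradient of $(I-P_h)u_h$ is $\3bar \cdot \3bar$-orthogonal to $\nabla L_h u_h$ and deduces the Pythagoras identity $\3bar P_h u_h \3bar^2 = \3bar u_h \3bar^2 - \3bar (I-P_h)u_h \3bar^2$; such orthogonality is automatic for an energy projection, but $P_h$ is an $L^2$-type projection, so this claim needs a justification the paper does not supply. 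You bypass it entirely with the inverse inequality $|L_h w_h|_{1,T}\le C h_T^{-1}\|L_h w_h\|_{0,T}$, which suffices because the target estimate carries the factor $h$. For the $L^2$ part, the paper's display appears to contain typos (read literally it yields $\|L_h u_h\|_0\le C h \3bar (I-P_h)u_h \3bar$, which fails, e.g., for $u_h\in Q_h P_{k+2}(T)$, where the right side vanishes but the left does not); your chain --- contraction of $P_h$, scaled norm equivalence via the injectivity of $Q_h$, and recovery of the face term through $h_T^{1/2}\|w_b-w_0\|_{0,\pT}\le C\bigl(h_T\3bar w_h\3bar_T+\|w_0\|_{0,T}\bigr)$ obtained by testing the weak-gradient identity with a $\bq\in\Lambda_k(T)$ of prescribed normal trace --- supplies exactly the quantitative content the paper glosses over, and each step checks out (the required $\bq$ can be built by a BDM-type lifting on the sub-triangulation of $T$ with a compatible one-piece divergence, which is indeed where the structure of $\Lambda_k(T)$ enters).

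One genuine discrepancy must be flagged: your $P_h$ is not the paper's $P_h$. The paper defines $P_h$ through the \emph{unweighted} discrete inner product $\int_T v_0 z_0\,d\bx + \sum_{e\subset\pT}\int_e v_b z_b\,dS$, whereas your contraction step $\|P_h w_h\|_{h,T}\le\|w_h\|_{h,T}$ holds only for the projection orthogonal in the $h_T$-weighted product you introduce. An orthogonal projection is a contraction in its own inner product; passing between the weighted and unweighted norms costs a factor $h_T^{-1/2}$, which would degrade your final bound to $C h^{k+5/2}|u|_{k+3}$. So, as written, you have proved the theorem for a re-weighted variant of $L_h$ (which still satisfies \eqref{p-k-2} and \eqref{optimal} by the same Scott--Zhang argument), not verbatim for the operator \eqref{lift}. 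Relatedly, your appeal to ``the usual affine scaling'' for $\|p\|_{0,T}\le C\|Q_h p\|_{h,T}$ is loose: general polygons/polyhedra are not affine images of a single reference element, so uniformity of the constant must come from dilation to unit size together with shape-regularity/compactness over the admissible shapes (a gap the paper itself shares). Both points are repairable --- either adopt the weighted definition of $P_h$ from the outset, or prove uniform boundedness of the unweighted projection in the weighted norm --- but the weighting issue should be stated explicitly rather than folded silently into the definition.
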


\begin{proof} Noting the weak gradient of $u_h-P_h u_h$ is a piece-wise higher order,
   $\3bar\cdot\3bar$-orthogonal  polynomial
  over the polynomial $\nabla L_h u_h$,  we have 
\a{ |L_h u_h  |_{1,h}^2= \3bar P_h u_h \3bar^2=\3bar  u_h \3bar^2-\3bar (I-P_h) u_h \3bar^2 \le
             \3bar u_h \3bar^2.  } 
By the triangle inequality, \eqref{optimal} and \eqref{o-2}, 
\a{  | u - L_h u_h |_{1,h} &\le | u - L_h Q_h u |_{1,h}+ | L_h (Q_h u - u_h) |_{1,h}\\ 
     &\le C h^{k+3} | u |_{k+3}+  \3bar Q_h u - u_h  \3bar\\
     &\le C h^{k+2} | u |_{k+3}. }
 By the finite dimensional norm equivalence with scaling, 
   the trace inequality and the definition of weak gradient,
  we have
\a{   \|  L_h   u_h \|_0^2  & \le C  \sum_{T\in \T_h}
            \Big(  \| P_h u_0 \|_T^2 
           + 2 h \| P_h( u_0 -  u_b) \|_{\partial T}^2  
           \Big)  \\ &  \le  C h \| L_h   u_h \|_0 \3bar (I-P_h) u_h \3bar. } 
By the triangle inequality, \eqref{optimal} and \eqref{o-2},  we get 
\a{  \|  u - L_h u_h\|_0 & \le  \|  u - L_h Q_h u \|_0+
              \|  L_h (Q_h u -  u_h) \|_0  \\
              &\le C h^{k+3} | u |_{k+3}. }  
\end{proof}

\section{Numerical Experiments}\label{Section:numerical-experiments}

\begin{figure}[htb]\begin{center}
\includegraphics[width=1.2in]{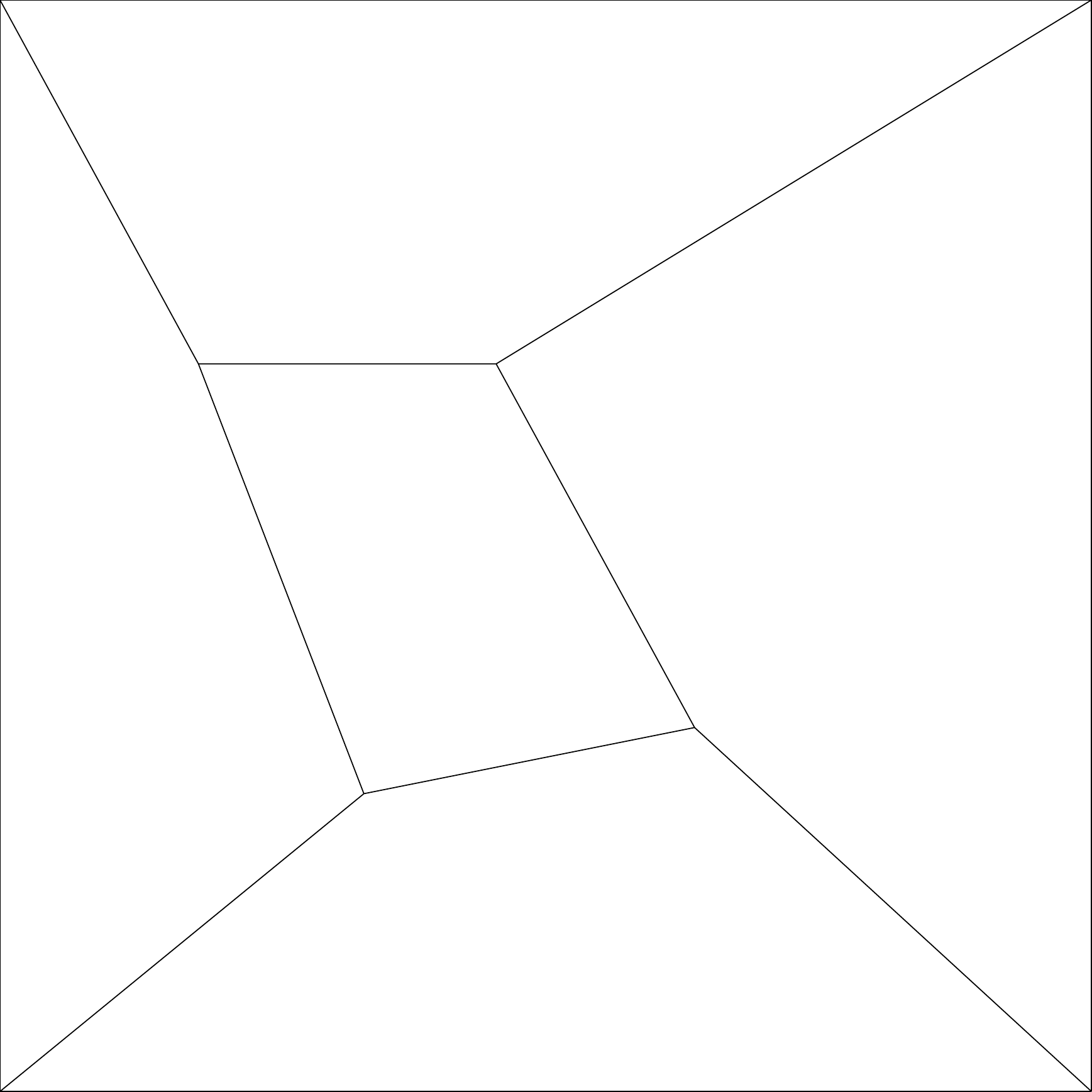} \
\includegraphics[width=1.2in]{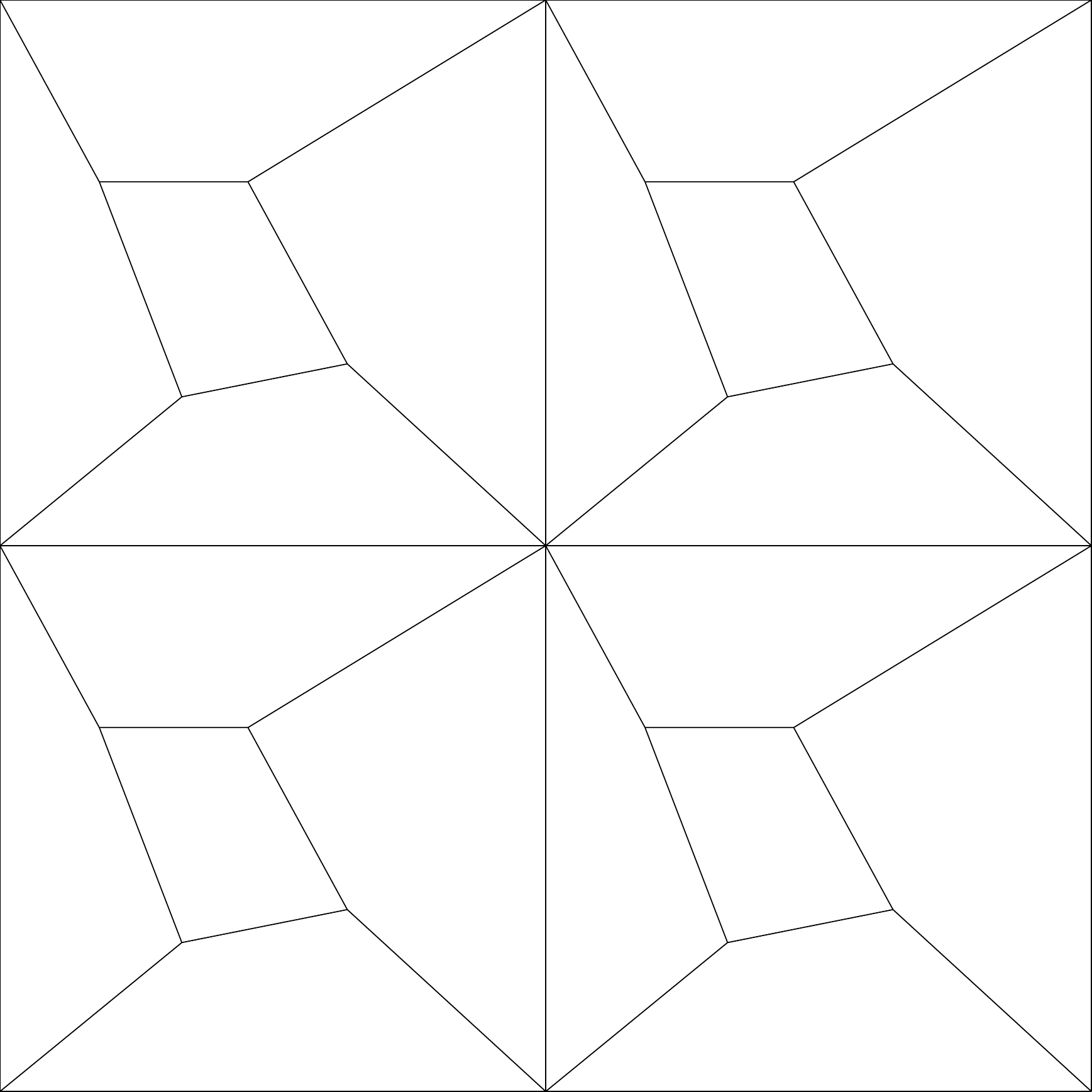} \
\includegraphics[width=1.2in]{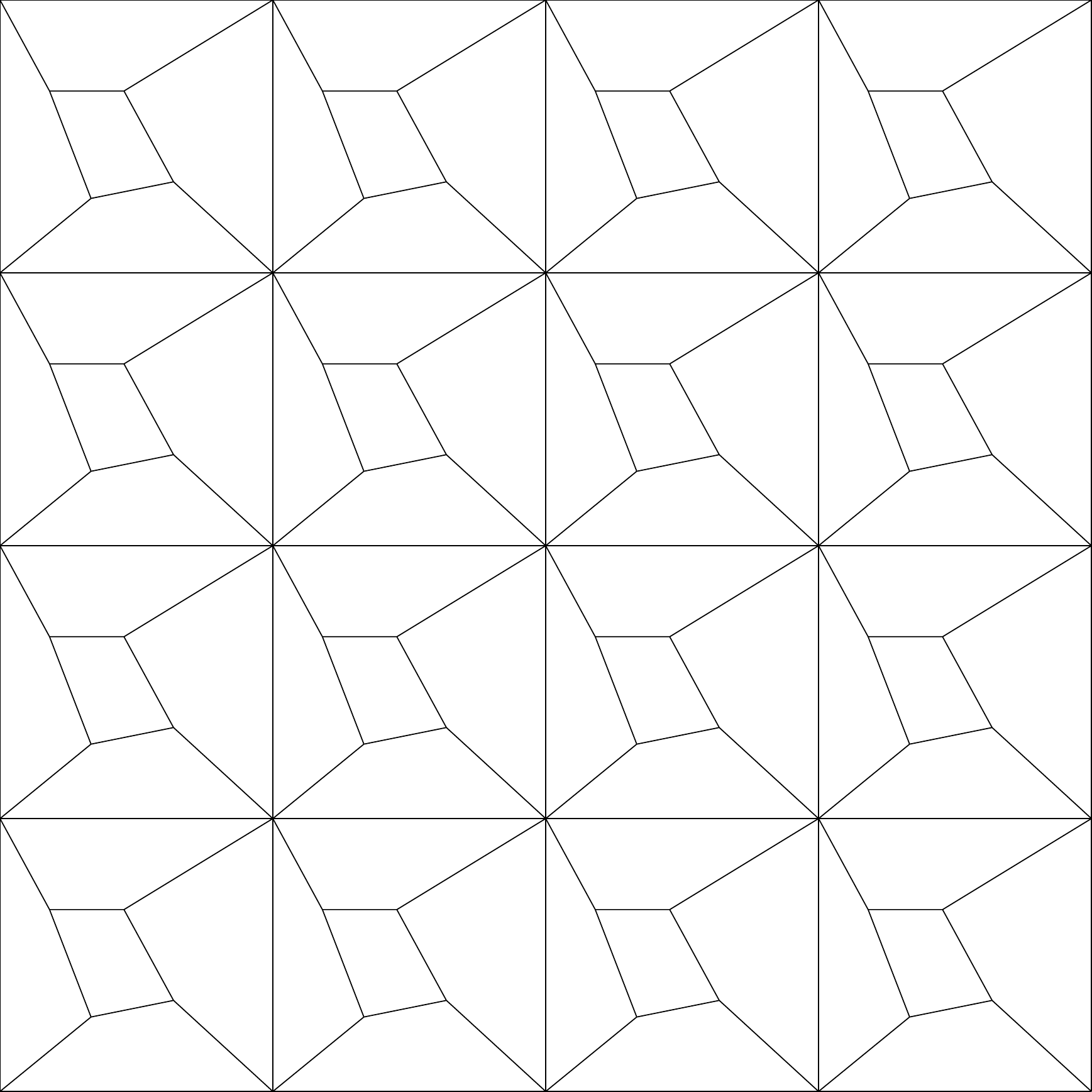}

\caption{The first three levels of quadrilateral grids, for Table \ref{t1}.  }
\label{g-4}
\end{center}
\end{figure}

\begin{table}[ht]
  \centering   \renewcommand{\arraystretch}{1.05}
  \caption{ Errors and orders of convergence by the $P_1$-$P_2$ WG finite element on quadrilateral
      grids shown in Figure \ref{g-4} for \eqref{s-1}. }
\label{t1}
\begin{tabular}{c|cc|cc|cc}
\hline
level     & $\| u-  u_h \|_0 $  &rate &  $\| Q_h u- u_h \|_0 $ &rate  &
        $\| u- L_h u_h \|_0 $ &rate   \\ \hline
 5&   0.7356E-03 &  2.00&   0.9360E-06 &  4.00&   0.1308E-05 &  4.00 \\
 6&   0.1838E-03 &  2.00&   0.5851E-07 &  4.00&   0.8178E-07 &  4.00 \\
 7&   0.4595E-04 &  2.00&   0.3663E-08 &  4.00&   0.5116E-08 &  4.00 \\
\hline
& $ | u-  u_h  |_{1,h} $  &rate &  $\3bar Q_h u- u_h \3bar $ &rate &
        $| u- L_h u_h |_{1,h} $ &rate   \\
\hline
 5&   0.5049E-01 &  1.00&   0.2156E-03 &  3.00&   0.2101E-03 &  3.00 \\
 6&   0.2524E-01 &  1.00&   0.2696E-04 &  3.00&   0.2627E-04 &  3.00 \\
 7&   0.1262E-01 &  1.00&   0.3371E-05 &  3.00&   0.3284E-05 &  3.00 \\
\hline  
\end{tabular}%
\end{table}%

\begin{table}[ht]
  \centering   \renewcommand{\arraystretch}{1.05}
  \caption{ Errors and orders of convergence by the $P_2$-$P_3$ WG finite element on quadrilateral
      grids shown in Figure \ref{g-4} for \eqref{s-1}. }
\label{t12}
\begin{tabular}{c|cc|cc|cc}
\hline
level     & $\| u-  u_h \|_0 $  &rate &  $\| Q_h u- u_h \|_0 $ &rate  &
        $\| u- L_h u_h \|_0 $ &rate   \\ \hline
 4&   0.2229E-03 &  3.00&   0.7659E-06 &  4.98&   0.8555E-06 &  4.98 \\
 5&   0.2787E-04 &  3.00&   0.2404E-07 &  4.99&   0.2682E-07 &  5.00 \\
 6&   0.3484E-05 &  3.00&   0.7521E-09 &  5.00&   0.8390E-09 &  5.00 \\
\hline
& $ | u-  u_h  |_{1,h} $  &rate &  $\3bar Q_h u- u_h \3bar $ &rate &
        $| u- L_h u_h |_{1,h} $ &rate   \\
\hline
 4&   0.1293E-01 &  2.00&   0.1487E-03 &  3.99&   0.9441E-04 &  3.99 \\
 5&   0.3233E-02 &  2.00&   0.9307E-05 &  4.00&   0.5911E-05 &  4.00 \\
 6&   0.8084E-03 &  2.00&   0.5819E-06 &  4.00&   0.3696E-06 &  4.00 \\
\hline  
\end{tabular}%
\end{table}%

We solve the 2D Poisson equation \eqref{pde} on the unit square domain. The exact
  solution is chosen as
\an{ \label{s-1} u=\sin(\pi x)\sin(\pi y).
  }
We compute the solution \eqref{s-1} on a perturbed quadrilateral grids, shown
   in Figure \ref{g-4}. 
We have two orders of superconvergence in $L^2$-norm and in $H^1$-like norm,
  shown in Tables \ref{t1}-\ref{t12}. 
In particular,  the error after lifting is two orders higher than that of the original error.

Next we solve again the 2D Poisson equation \eqref{pde} on the unit square domain with
 exact solution \eqref{s-1}.
We use  quadrilateral-pentagon-hexagon hybrid grids, shown
   in Figure \ref{g-6}. 
Again the error after lifting is two orders higher, shown in Table \ref{t2}.

\begin{figure}[htb]\begin{center}
\includegraphics[width=1.2in]{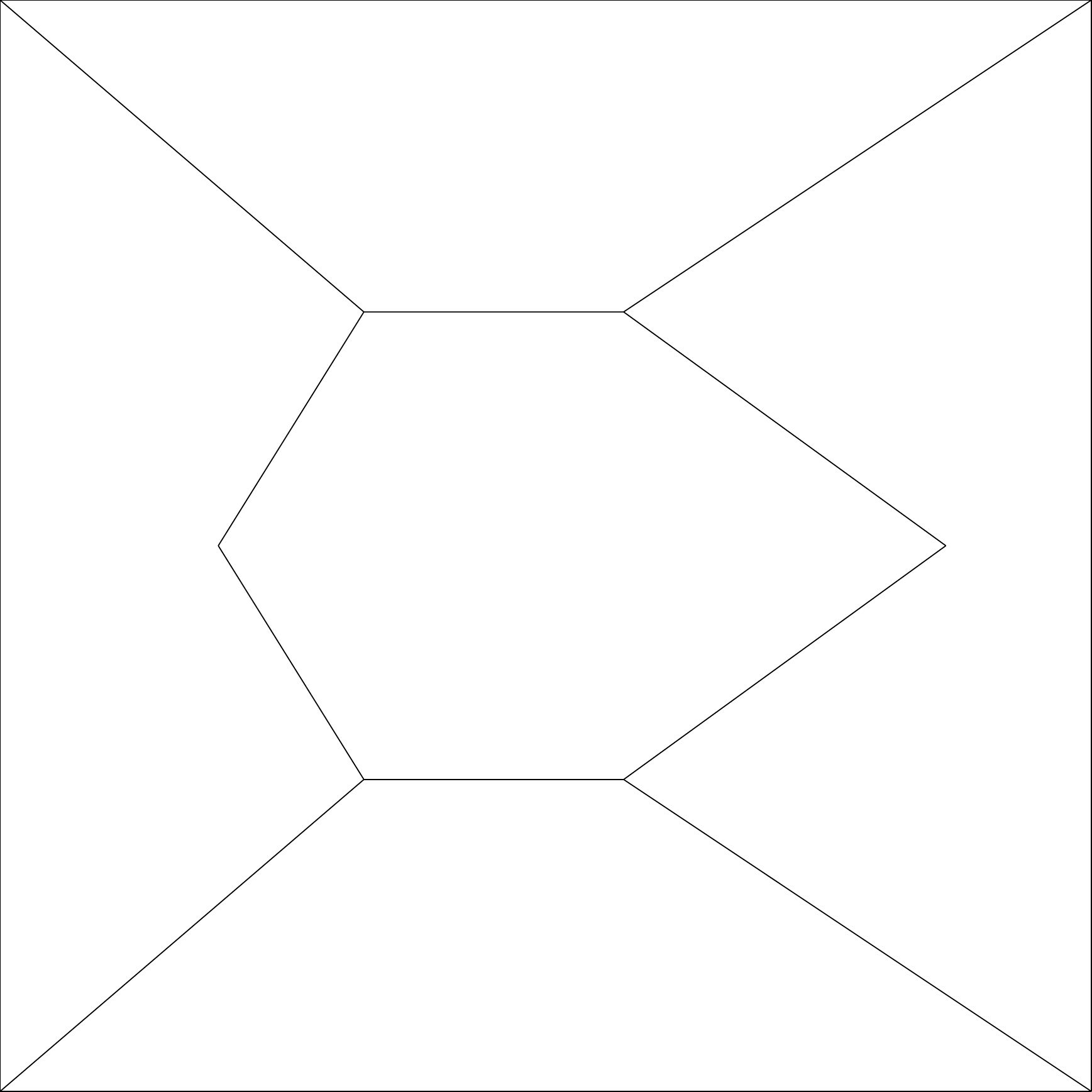} \
\includegraphics[width=1.2in]{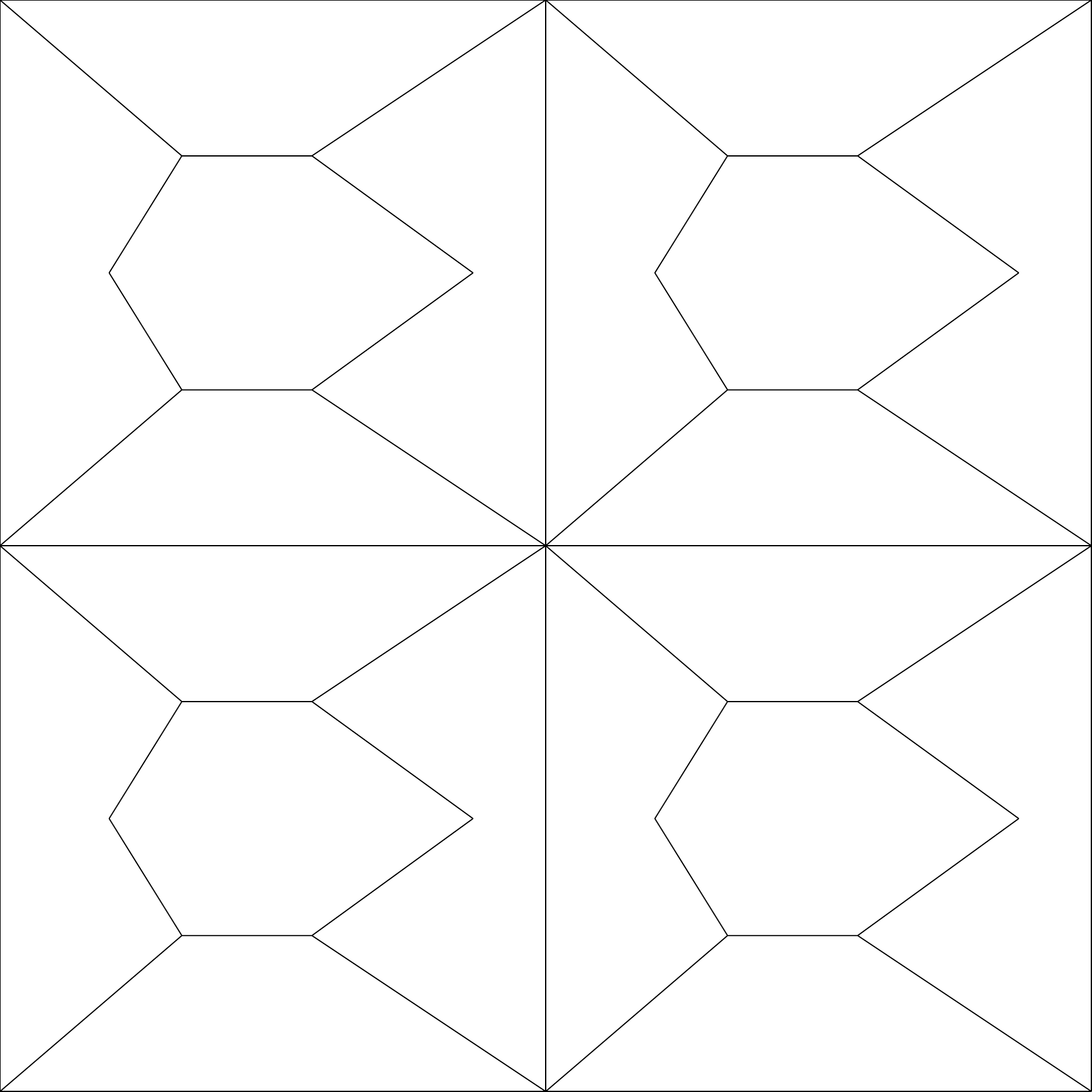} \
\includegraphics[width=1.2in]{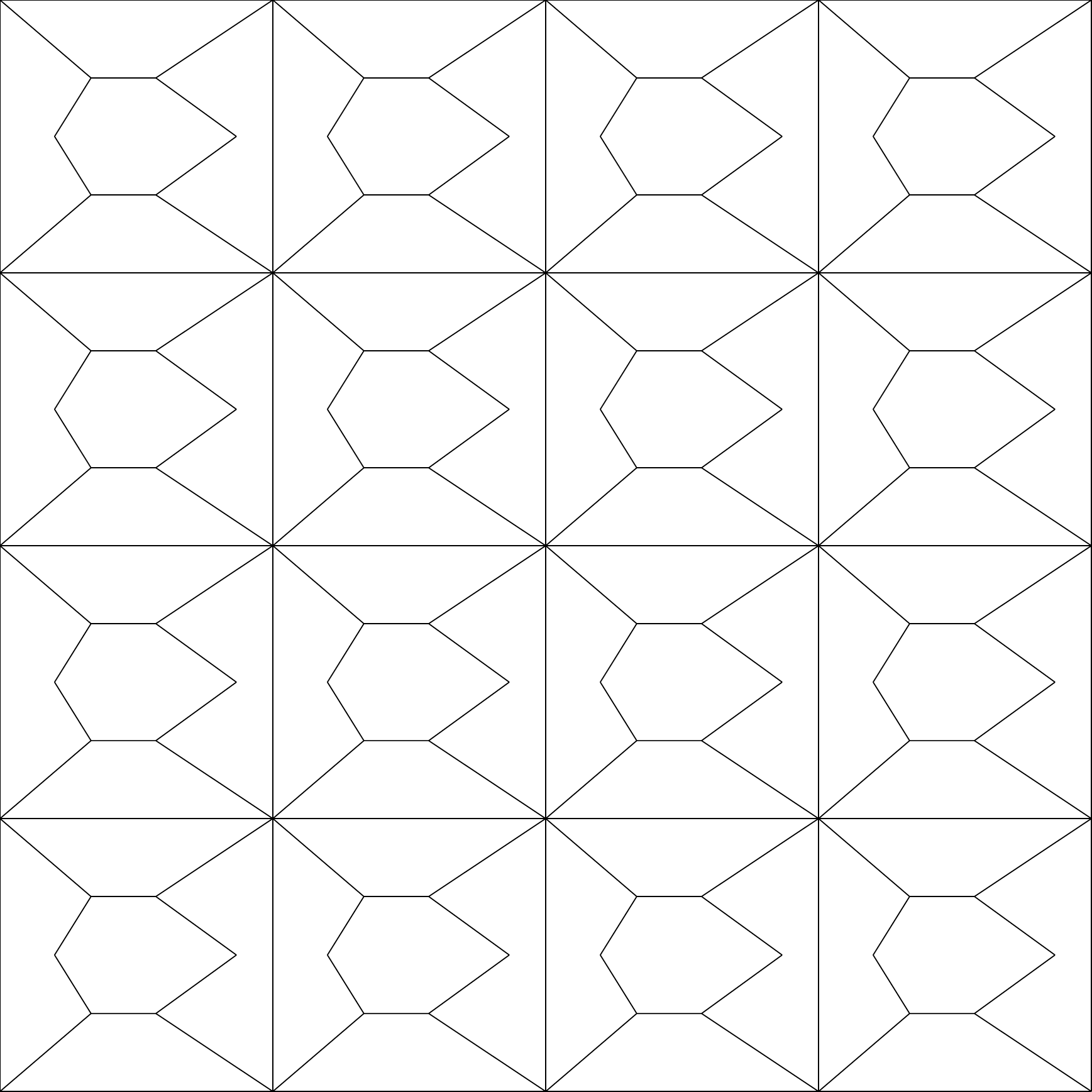}

\caption{The first three levels of mixed-polygon grids, for Tables \ref{t2}.  }
\label{g-6}
\end{center}
\end{figure}

\begin{table}[ht]
  \centering   \renewcommand{\arraystretch}{1.05}
  \caption{ Errors and orders of convergence, by the $P_1$-$P_2$ WG finite element on mixed-polygon grids shown in Figure \ref{g-6} for \eqref{s-1}. }
\label{t2}
\begin{tabular}{c|cc|cc|cc}
\hline
level     & $\| u-  u_h \|_0 $  &rate &  $\| Q_h u- u_h \|_0 $ &rate  &
        $\| u- L_h u_h \|_0 $ &rate   \\ \hline
 5&   0.8444E-03 &  2.00&   0.1504E-05 &  4.00&   0.1973E-05 &  4.00 \\
 6&   0.2110E-03 &  2.00&   0.9406E-07 &  4.00&   0.1234E-06 &  4.00 \\
 7&   0.5273E-04 &  2.00&   0.5875E-08 &  4.00&   0.7707E-08 &  4.00 \\
\hline
& $ | u-  u_h  |_{1,h} $  &rate &  $\3bar Q_h u- u_h \3bar $ &rate &
        $| u- L_h u_h |_{1,h} $ &rate   \\
\hline
 5&   0.5891E-01 &  1.00&   0.4798E-03 &  3.00&   0.3272E-03 &  3.00 \\
 6&   0.2945E-01 &  1.00&   0.6001E-04 &  3.00&   0.4090E-04 &  3.00 \\
 7&   0.1472E-01 &  1.00&   0.7502E-05 &  3.00&   0.5113E-05 &  3.00 \\
\hline  
\end{tabular}%
\end{table}%

Finally we solve the 3D Poisson equation \eqref{pde} on the unit cube,  with exact
 solution 
\an{ \label{s-2}  u&=\sin(\pi x) \sin(\pi y)\sin(\pi z). } We use a wedge-type grids shown in
 Figure \ref{grid3}.  The
 lifted finite element solution has two orders of superconvergence,  shown in Table \ref{t3}.

\begin{figure}[h!]
\begin{center}
 \setlength\unitlength{0.8pt}
    \begin{picture}(320,118)(0,3)
    \put(0,0){\begin{picture}(110,110)(0,0)
       \multiput(0,0)(80,0){2}{\line(0,1){80}}  \multiput(0,0)(0,80){2}{\line(1,0){80}}
       \multiput(0,80)(80,0){2}{\line(1,1){20}} \multiput(0,80)(20,20){2}{\line(1,0){80}}
       \multiput(80,0)(0,80){2}{\line(1,1){20}}  \multiput(80,0)(20,20){2}{\line(0,1){80}}
    \put(80,0){\line(-1,1){80}}
      \end{picture}}
    \put(110,0){\begin{picture}(110,110)(0,0)
       \multiput(0,0)(40,0){3}{\line(0,1){80}}  \multiput(0,0)(0,40){3}{\line(1,0){80}}
       \multiput(0,80)(40,0){3}{\line(1,1){20}} \multiput(0,80)(10,10){3}{\line(1,0){80}}
       \multiput(80,0)(0,40){3}{\line(1,1){20}}  \multiput(80,0)(10,10){3}{\line(0,1){80}}
    \put(80,0){\line(-1,1){80}}
       \multiput(40,0)(40,40){2}{\line(-1,1){40}}
      \end{picture}}
    \put(220,0){\begin{picture}(110,110)(0,0)
       \multiput(0,0)(20,0){5}{\line(0,1){80}}  \multiput(0,0)(0,20){5}{\line(1,0){80}}
       \multiput(0,80)(20,0){5}{\line(1,1){20}} \multiput(0,80)(5,5){5}{\line(1,0){80}}
       \multiput(80,0)(0,20){5}{\line(1,1){20}}  \multiput(80,0)(5,5){5}{\line(0,1){80}}
    \put(80,0){\line(-1,1){80}}
       \multiput(40,0)(40,40){2}{\line(-1,1){40}}

       \multiput(20,0)(60,60){2}{\line(-1,1){20}}   \multiput(60,0)(20,20){2}{\line(-1,1){60}}
      \end{picture}}

    \end{picture}
    \end{center}
\caption{  The first three levels of wedge grids used in Table \ref{t3}. }
\label{grid3}
\end{figure}
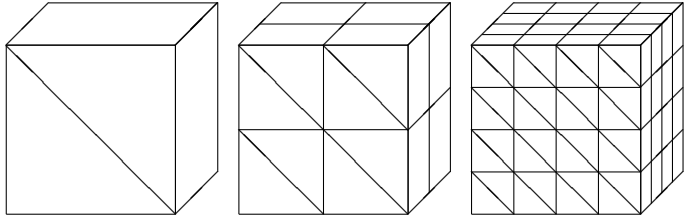

\begin{table}[ht]
  \centering   \renewcommand{\arraystretch}{1.05}
  \caption{ Errors and orders of convergence, by the $P_1$-$P_2$ WG finite element on 3D wedge-type polyhedral grids shown in Figure \ref{grid3} for \eqref{s-2}. }
\label{t3}
\begin{tabular}{c|cc|cc|cc}
\hline
level     & $\| u-  u_h \|_0 $  &rate &  $\| Q_h u- u_h \|_0 $ &rate  &
        $\| u- L_h u_h \|_0 $ &rate   \\ \hline
 4&    0.9655E-02&2.0&    0.1608E-03&3.9&    0.2626E-03&3.9 \\
 5&    0.2398E-02&2.0&    0.1022E-04&4.0&    0.1658E-04&4.0 \\
 6&    0.5987E-03&2.0&    0.6419E-06&4.0&    0.1039E-05&4.0 \\
\hline
& $ | u-  u_h  |_{1,h} $  &rate &  $\3bar Q_h u- u_h \3bar $ &rate &
        $| u- L_h u_h |_{1,h} $ &rate   \\
\hline
 4&    0.2289E+00&1.0&    0.2500E-01&3.0&    0.1269E-01&3.0 \\
 5&    0.1145E+00&1.0&    0.3136E-02&3.0&    0.1595E-02&3.0 \\
 6&    0.5724E-01&1.0&    0.3923E-03&3.0&    0.1997E-03&3.0 \\
\hline  
\end{tabular}%
\end{table}%

\end{document}